\newtheorem{definition}{Definition}
\newtheorem{proposition}{Proposition}
\newtheorem{problem}{Problem}
\newtheorem{theorem}{Theorem}
\title{\LARGE \bf
Model Predictive Online Monitoring of Dynamical Systems for\\
Nested Signal Temporal Logic Specifications
}
\author{Tao Han,  Shaoyuan Li and Xiang Yin
\thanks{This work was supported by the National Natural Science Foundation of China (62173226, 62061136004).}
 \thanks{T. Han,  S. Li and X. Yin are with the School of Automation and Intelligent Sensing, Shanghai Jiao Tong University, Shanghai 200240, China.
	{\tt\small  E-mail: \{coolhantao,syli,yinxiang\}@sjtu.edu.cn}.
}
}
\begin{document}

\maketitle
 
\thispagestyle{empty}
\pagestyle{empty}
\setlength{\abovecaptionskip}{6pt}
\setlength{\belowcaptionskip}{3pt}
\setlength{\textfloatsep}{0pt}
 \def \init{{\scriptstyle\textsf{int}}}
 \def \endt{{\scriptstyle\textsf{end}}}
\begin{abstract}
This paper investigates the online monitoring problem for cyber-physical systems under signal temporal logic (STL) specifications. 
The objective is to design an online monitor that evaluates system correctness at runtime based on partial signal observations up to the current time  so that alarms can be issued whenever the specification is violated or will inevitably be violated in the future.
We consider a model-predictive setting where the system's dynamic model is available and can be leveraged to enhance monitoring accuracy. 
However, existing approaches are limited to a restricted class of STL formulae, permitting only a single application of temporal operators. This work addresses the challenge of nested temporal operators in the design of model-predictive monitors.
Our method utilizes syntax tree structures to resolve dependencies between temporal operators and introduces the concept of basic satisfaction vectors. 
A new model-predictive monitoring algorithm is proposed by recursively updating these vectors online while incorporating pre-computed satisfaction regions derived from offline model analysis. We prove that the proposed approach is both sound and complete, ensuring no false alarms or missed alarms.
Case studies are provided to demonstrate the effectiveness of our method. 
\end{abstract}

\section{Introduction} 

Specification-based monitoring  has emerged as a popular approach for evaluating the safety and correctness of complex engineering cyber-physical systems, such as smart cities \cite{ma2021novel}, autonomous vehicles \cite{sahin2020autonomous}, power plants \cite{beg2018signal}, and industrial IoTs \cite{chen2020temporal}.
In this context, a monitor observes the state trajectory generated by the system and evaluates the correctness of the trajectory based on a given formal specification \cite{bartocci2018specification,yan2022distributed, bonnah2022runtime}. Compared with formal verification techniques such as model checking, the key advantage of monitoring is that it is more lightweight, as it only assesses the correctness of the system along a single observed trajectory without explicitly enumerating the entire reachable state space. Therefore, monitoring can be designed independently and implemented as an add-on module for arbitrary systems—even those treated as black boxes.

In recent years, significant advancements have been made in designing monitoring algorithms for various types of formal specifications, such as Linear Temporal Logic (LTL) \cite{eisner2003reasoning}, Metric Temporal Logic (MTL) \cite{dokhanchi2014line,donze2013efficient}, and Signal Temporal Logic (STL) \cite{deshmukh2017robust,zhang2023online}. 
Among these, STL has become one of the most widely used formal specifications for CPS due to its ability to characterize complex spatio-temporal constraints in real-valued, real-time physical signals.
Monitoring algorithms can be further categorized into offline monitoring \cite{fainekos2009robustness,donze2013efficient} and online monitoring \cite{dokhanchi2014line, deshmukh2017robust}, depending on the information available to the monitor. In the offline setting, the monitor evaluates the correctness of a system using the entire trajectory. However, this approach is unsuitable for systems operating in real time, where only the signal generated up to the current moment is accessible.
In contrast, online monitoring algorithms must account for all possible future behaviors to issue early warnings or terminate processes before a specification violation occurs. As a result, online monitoring is widely adopted in safety-critical systems as a predictive add-on component to ensure operational safety.

Due to the physical dynamics of the system, online signals are not arbitrary in general but follow underlying rules. 
By leveraging information about system behavior, we can better reason about the feasibility or likelihood of future signals, enabling more precise monitoring decisions \cite{abbas2022leveraging}. 
To this end, \emph{predictive monitoring} has recently emerged for systems with either an explicit model or a large dataset of operational history. 
For example, in \cite{qin2020clairvoyant, ma2021predictive, yoon2021predictive, momtaz2023predicate}, 
neural networks trained on historical data are used to predict future signals in order to improve monitoring accuracy.
However, since these approaches adopt a data-driven methodology, their predictions may be unreliable. To address this, uncertainty quantification techniques, such as conformal predictions, are often employed to ensure prediction confidence \cite{cairoli2023learning,lindemann2023conformal,zhao2024robust}. While effective, these methods may introduce additional conservatism into the final monitoring results.

More recently, by leveraging the explicit system model of the underlying dynamical system, a new framework called \emph{model-predictive monitoring} has been proposed in \cite{yu2024model}. This approach further improves monitoring accuracy when a precise system model is available. By integrating offline reachability computations with online satisfaction evaluation, it achieves an effective balance between computational complexity and accuracy. In \cite{wang2024sleep}, a self-triggered information acquisition mechanism is introduced for model-predictive monitoring to reduce state-sampling overhead. However, existing model-predictive monitoring algorithms remain limited to a simple fragment of STL specifications and nested temporal operators, e.g., ``eventually always stays in a region",  are currently unsupported.

In this work, we investigate the design of model-predictive online monitors for signal temporal logic specifications. 
Unlike existing approaches, we consider a general fragment of STL that permits nested temporal operators.
This nested setting introduces fundamental challenges beyond prior works, which 
track STL formula progress using an index set of remaining formulae. 
However, this approach fails under nested temporal constraints due to dependencies between operators. To address this, we propose to use syntax trees \cite{ho2022automaton,yu2024continuous,claudet2024novel} to resolve temporal operator dependencies, and use basic satisfaction vectors as dynamically updated key information during online monitoring.
Our algorithm leverages these vectors, updated recursively in real time, along with pre-computed safety regions derived from offline model analysis. 
This framework successfully extends model-predictive monitoring to general STL specifications. Case studies are also provided to demonstrate the effectiveness of our approach.

The remainder of the paper is organized as follows. 
In Section~\ref{sec:2}, we first present some basic preliminaries. 
The model-predictive monitoring problem is then formulated in Section~\ref{sec:3}. 
In Section~\ref{sec:4}, we present the syntax tree structure and show how to store 
the progress of nested STL formulae using basic satisfaction vectors. 
The main online monitoring algorithm is provided in Section~\ref{sec:5}, where the offline pre-computation of satisfaction regions is also discussed.
The proposed results are illustrated by case studied in Section~\ref{sec:6}. 
Finally, we conclude the paper in Section~\ref{sec:7}.

\section{Preliminary}\label{sec:2} 
\subsection{System Model}
We consider a discrete-time control system of form
\begin{equation}\label{system}
x_{k+1}=f(x_k,u_k),
\end{equation}
where 
$x_{k}\in\mathcal{X}\subset \mathbb{R}^{n}$ denotes the system state at time instant $ k \in \mathbb{Z}_{\geq 0} $, 
$u_{k}\in\mathcal{U}\subset \mathbb{R}^{m}$ represents the control input at time instant $k$ and $f:\mathcal{X}\times\mathcal{U}\rightarrow\mathcal{X}$ is the dynamic function of the system, assumed to be continuous in $\mathcal{X}\times\mathcal{U}$. 
We assume that the state space $\mathcal{X}$ and input space $\mathcal{U}$ are both bounded due to physical constraints.

Suppose that the system state is $x_k$ at time instant $k$. 
Then given a sequence of control inputs $\mathbf{u}_{k:T-1} = u_k u_{k+1} \dots u_{T-1} \in \mathcal{U}^{T-k} $, the corresponding state trajectory generated by the system is defined as $ \xi_f(x_k, \mathbf{u}_{k:T-1}) = \mathbf{x}_{k+1:T} = x_{k+1} \dots x_T \in \mathcal{X}^{T-k} $, where each subsequent state satisfies the recursive relation $x_{i+1} = f(x_i, u_i)$ for all $i = k, \ldots, T-1$.
 
\subsection{Signal Temporal Logic}
We adopt Signal Temporal Logic (STL) as the formal specification language to evaluate trajectory correctness. The syntax of STL formulae is recursively defined as:
\[
\Phi::=\top\mid\pi^\mu\mid\neg\Phi\mid\Phi_1\wedge\Phi_2\mid\Phi_1\mathbf{U}_{[a,b]}\Phi_2, 
\]
where 
$\top$ is the \textit{true} predicate and 
$\pi^{\mu}$ is an atomic predicate whose truth value is determined by the sign of its underlying predicate function $\mu:\mathbb{R}^{n}\rightarrow\mathbb{R}$. 
Specifically, an atomic predicate $\pi^{\mu}$ is true at state $x_{k}$ when $\mu(x_{k}) \geq 0$; otherwise it is false. Operators $\neg$ and $\wedge$ are the standard Boolean operators ``negation" and "conjunction", respectively. 
One can further use them to induce other operators such as ``disjunction" 
$\Phi_{1} \vee \Phi_{2} := \neg(\neg \Phi_{1} \wedge \neg \Phi_{2})$ and ``implication"  $\Phi_{1} \rightarrow \Phi_{2} := \neg \Phi_{1} \vee \Phi_{2}$. $\mathbf{U}_{[a,b]}$ is the temporal operator ``until", where $a, b \in \mathbb{Z}_{\geq 0}$ are two integers with $a \leq b$. 
Note that, since we consider discrete-time setting,  $[a,b]$  is the set of all integers between $a$ and $b$ including themselves. 


Let  $\mathbf{x} = x_0 x_1 \dots$ be a state sequence,  $ k \in \mathbb{Z}_{\geq 0} $ be a time instant and $\Phi$ be an STL formula. 
We denote by  $(\mathbf{x},k) \models \Phi$ 
if sequence $\mathbf{x}$ satisfies STL formula $\Phi$ at time instant $k$. 
The reader is referred to \cite{malerMonitoringTemporalProperties2004} for more details on the semantics of STL formulae. Particularly, for atomic predicates,, we have 
$(\mathbf{x}, k) \models \pi^{\mu}$ iff 
$\mu(x_k) \geq 0$, i.e., $\mu(x_k)$ is non-negative for the current state $x_k$, 
and for temporal operators, 
we have 
$(\mathbf{x},k) \models \Phi_{1} \mathbf{U}_{[a,b]}\Phi_{2}$ iff 
there exists $ k'\!\in\![k+a, k+b]$ such that 
\[
(\mathbf{x}, k')\models \Phi_{2}\wedge (\forall k''\!\in\![k, k']) [(\mathbf{x}, k'') \models \Phi_{1}], 
\]
i.e., 
$\Phi_{2}$ will eventually be satisfied at some instant between $[k+a, k+b]$ 
and $\Phi_1$ holds consistently before then. Furthermore, we can also induce temporal operators:

\begin{itemize}
    \item “\textit{eventually}" $\mathbf{F}_{[a,b]}\Phi := \top \mathbf{U}_{[a,b]}\Phi$ such that it holds when $(\mathbf{x},k) \models \Phi$ for some $k' \in [k+a, k+b]$;
    \item “\textit{always}" $\mathbf{G}_{[a,b]}\Phi := \neg \mathbf{F}_{[a,b]} \neg \Phi$ such that it holds when $(\mathbf{x},k) \models \Phi$ for any $k' \in [k+a, k+b]$.
\end{itemize}
We write $\mathbf{x} \models \Phi$ whenever $(\mathbf{x}, 0) \models \Phi$. 
We assume that the operation horizon $T$ is sufficiently long to evaluate the satisfaction of  $\Phi$. 

\section{Problem Formulation}\label{sec:3}

\subsection{Fragment of STL Formulae}
In this paper, 
we consider the following slightly restricted fragments of STL formulae
\begin{subequations}\label{eq:myequations}
\begin{equation}\varphi::=\top\mid\pi^\mu\mid\neg\varphi\mid\varphi_1\wedge\varphi_2\end{equation}
\begin{equation}\Phi::=\mathbf{F}_{[a,b]}\Phi\mid\mathbf{G}_{[a,b]}\Phi\mid\Phi_{1}\mathbf{U}_{[a,b]}\Phi_{2}\mid\Phi_{1}\wedge\Phi_{2}\mid\varphi. 
\end{equation}
\end{subequations}
Note that, compared to the full fragment of STL, we require that negation can only be applied to Boolean operators. Therefore, the overall STL formula considered is the conjunction of a set of sub-formulae in which nested temporal operators can be applied to an arbitrary Boolean formula.  

For technical purposes, we introduce a new temporal operator \(\mathbf{U}^{\prime}\), defined by: \((\mathbf{x},k) \models \Phi_1\mathbf{U}_{[a,b]}^{\prime}\Phi_2\) iff there exists \(k' \in [k+a,k+b]\) such that  
\[  
(\mathbf{x},k') \models \Phi_2 \wedge (\forall k''\in [k+a,k'])[(\mathbf{x},k'') \models \Phi_1].  
\]  
Compared to the original definition of ``until," the key difference is that the effective horizon of \(\mathbf{U}_{[a,b]}\) is \([0, b]\), while the effective horizon of \(\mathbf{U}_{[a,b]}^{\prime}\) is \([a, b]\). Throughout this paper, we will refer to \(\mathbf{U}_{[a,b]}^{\prime}\) as the ``until" operator. This replacement is primarily technical and does not lose generality, as the standard \(\mathbf{U}\) can always be expressed as  
\[  
(\mathbf{x},k) \models \Phi_1\mathbf{U}_{[a,b]}\Phi_2 \Leftrightarrow (\mathbf{x},k) \models (\Phi_1\mathbf{U}_{[a,b]}^{\prime}\Phi_2) \wedge (\mathbf{G}_{[0,a]}\Phi_1).  
\]

Recall that for the predicate \(\pi^\mu\) in (2a), its satisfaction region, denoted by \(\mathcal{H}^{\mu}\), is the solution to the inequality \(\mu(x) \geq 0\), i.e., \(\mathcal{H}^{\mu} = \{x \in \mathcal{X} \mid \mu(x) \geq 0\}\). For other Boolean operators, we have \(\mathcal{H}^{\neg\varphi} = \mathcal{X} \setminus \mathcal{H}^{\varphi}\) and \(\mathcal{H}^{\varphi_1 \wedge \varphi_2} = \mathcal{H}^{\varphi_1} \cap \mathcal{H}^{\varphi_2}\). Therefore, instead of writing \(\varphi\), we will hereafter simply denote it as \(x \in \mathcal{H}^\mu\) or \(x \in \mathcal{H}\), using its satisfaction region.  

Additionally, while we consider the temporal operator ``eventually" (\(\mathbf{F}\)) in the semantics, it is subsumed by ``until" (\(\mathbf{U}'\)) since \(\mathbf{F}_{[a,b]} \Phi\) can be expressed as \(x \in \mathcal{X} \mathbf{U}_{[a,b]}^{\prime} \Phi\). Thus, we only need to handle the temporal operators \(\mathbf{G}\) and \(\mathbf{U}'\) from a technical standpoint.  

Based on the above discussion, the STL formula \(\Phi\) in ~\eqref{eq:myequations} can be equivalently expressed as:  
\begin{equation}\label{eq:syntax-STL-2}  
\Phi ::= \mathbf{G}_{[a,b]} \Phi \mid \Phi_1 \mathbf{U}_{[a,b]}^{\prime} \Phi_2 \mid \Phi_1 \wedge \Phi_2 \mid x \in \mathcal{H}  
\end{equation}

\subsection{Online Monitoring of STL}
At time instant $k\leq T$, the system only generates a 
partial signal $\mathbf{x}_{0:k}=x_0x_1\dots x_k$ (called a prefix)  and the remaining signals $ \mathbf{x}_{k+1:T} $ (called suffix) will only be available in the future. 
Therefore, we a partial signal $\mathbf{x}_{0:k}$, we denote by 
\begin{itemize}
    \item 
    $\mathbf{x}_{0:k}\models \Phi$ if $\forall \mathbf{x}_{k+1:T}\in \mathcal{X}^{T-k}: 
    \mathbf{x}_{0:k}\mathbf{x}_{k+1:T}\models \Phi$; 
    \item 
    $\mathbf{x}_{0:k}\not\models \Phi$ if $\forall \mathbf{x}_{k+1:T}\in \mathcal{X}^{T-k}: 
    \mathbf{x}_{0:k}\mathbf{x}_{k+1:T}\not\models \Phi$; 
    \item 
    $\mathbf{x}_{0:k}\models_? \Phi$ otherwise. 
\end{itemize}

Note that the above partial signal evaluation does not account for system dynamics. In other words, some suffixes in  $\mathbf{x}_{k+1:T}\in \mathcal{X}^{T-k}$ may be dynamically infeasible. Thus, in the context of \emph{model-predictive monitoring}, we further classify a prefix signal  $ \mathbf{x}_{0: k} $ as
\begin{itemize}
    \item 
    \textbf{violated} if, for any control input $ \mathrm{u}_{k: T-1} $, we have $\mathbf{x}_{0:k}\xi_f(x_k,\mathrm{u}_{k:T-1})\not\models\Phi$; 
    \item 
    \textbf{feasible} if, for some control input $ \mathrm{u}_{k: T-1} $, we have 
    $ \mathbf{x}_{0: k} \xi_{f}(x_{k}, \mathrm{u}_{k: T-1}) \models \Phi$. 
\end{itemize}
Intuitively, a prefix signal is violated when either the current state already violates the specification or when all possible future trajectories will violate it inevitably. For instance, consider a safety specification $\mathbf{G}_{[0,T]} x \in \mathcal{H}$. If the system reaches any state $x_k \notin \mathcal{H}$ for $k < T$, this immediately constitutes a violation. More subtly, even when the current state satisfies the predicate, the prefix is violated if from state $x_k$ there exists no control sequence $\mathrm{u}_{k:T-1}$ that can generate a trajectory $\xi_f(x_k, \mathrm{u}_{k:T-1})$ remaining entirely within $\mathcal{H}$ throughout the remaining horizon. 
This mirrors real-world scenarios such as a vehicle approaching an obstacle at excessive speed. 
Even before physical collision occurs, the system is already violated because no braking action can prevent the impending violation given the current velocity and distance.

\begin{problem}[\bf Model-Predictive Online Monitoring]\upshape
Given a system with dynamic in ~\eqref{system} and an STL formula $\Phi$, design an online monitor
\begin{equation}
    \mathcal{M}:\mathcal{X}^*\to\{\texttt{vio},\texttt{feas}\}
\end{equation}
such that, for any prefix $\mathbf{x}_{0:k}\in \mathcal{X}^*$, we have
\begin{itemize}
    \item 
    $\mathcal{M}(\mathbf{x}_{0:k})=\texttt{vio}$ iff $\mathbf{x}_{0:k}$ is violated; 
    \item 
    
    $\mathcal{M}(\mathbf{x}_{0:k})=\texttt{feas}$ iff $\mathbf{x}_{0:k}$ is feasible. 
\end{itemize}
\end{problem}

\section{Tree of Nested STL Formulae}\label{sec:4}
The main challenge in handling STL  formulae with nested temporal operators lies in the dependency between inner and outer operators. For instance, consider the STL formula $\mathbf{G}_{[a_1,b_1]}\mathbf{F}_{[a_2,b_2]}\varphi$. This formula requires that the inner formula $\mathbf{F}_{[a_2,b_2]}\varphi$ must be satisfied for all time instants between $a_1$ and $b_1$. Specifically, starting from any time instant within $[a_1, b_1]$, the system should satisfy $\varphi$ within the interval $[a_2, b_2]$ from \emph{that point onward}. Consequently, evaluating the correctness of a trajectory requires a total horizon of $b_1 + b_2$.
To address this dependency clearly, we introduce the concept of  \emph{syntax tree} and the notion of  \emph{satisfaction vector}. These tools help systematically resolve the dependencies between nested temporal operators.

\subsection{Syntax Trees for STL formulae}
Hereafter, we will equivalently represent an STL formula by a \emph{rooted tree}, which is a 4-tuple  
\[
\mathcal{T} = (V,L, E, v_{\text{root}}), 
\]
where:
\begin{itemize}
\item  
$V = \{v_1, v_2, \dots, v_m\}$ is the set of nodes;
\item 
$L = \{l, r, \bot\}$ is a label set, where $l$ and $r$ denote ``left" and ``right", respectively, and $\bot$ denotes ``no order";
\item 
$E \subseteq V \times L \times V$ is the set of edges, where each $(v, \ell, v') \in E$ indicates that $v'$ is a child node of $v$, and is specifically a left or right child if $\ell = l$ or $\ell = r$, respectively;
\item 
$v_{\text{root}} \in V$ is the unique root node with no parent.
\end{itemize}
Clearly, a rooted tree induces a partial order  $\prec$ on $V$, where $v'\prec v$ iff $v$ is an ancestor  of $v'$. 
For each node $v$, we denote by $\textsf{child}(v) = \{v' \mid \exists \ell \in L: (v, \ell, v') \in E\}$ the set of its children. Additionally, we denote by $\textsf{child-l}(v)$ and $\textsf{child-r}(v)$ the left and right child of $v$, respectively, if they exist.


In order to represent an STL formula as a tree, we introduce the following four types of nodes:
\begin{itemize}
  \item 
  $\mathcal{H}$-nodes: 
  Each node represents a satisfaction region of a Boolean formula. 
  Such nodes have no children, as Boolean formulae are always at the innermost level of the entire formula.
  \item 
  $\wedge$-nodes: 
  Each node represents the Boolean operator ``conjunction" and has two or more unordered children. 
  These nodes are evaluated instantly, meaning there is no time interval associated with them.
  \item 
  $\mathbf{G}$-nodes: 
  Each node represents the temporal operator ``always" and has exactly one child. Such nodes are associated with a time interval that determines the evaluation period of their descendants.
  \item 
  $\mathbf{U}'$-nodes: 
  Each node represents the temporal operator ``until" and has both a left and a right child.
  These nodes are also associated with a time interval that determines the evaluation period of their descendants.
\end{itemize}
Now, we are ready to formally define the syntax tree. 
\begin{definition}[\bf Syntax Trees]\upshape 
Let $\Phi$ be an STL formula defined by syntax in ~\eqref{eq:syntax-STL-2}.  
The syntax tree of STL formula $\Phi$, denoted by $\mathcal{T}_{\Phi}=(V_\Phi, L, E_\Phi, v_{\text{root},\Phi})$, 
is defined recursively as follows:
\begin{itemize}
    \item 
    If $\Phi = \Phi_1\wedge\Phi_2\wedge\cdots\wedge\Phi_n$, then 
    $ V_\Phi=(\cup_{i=1}^nV_{\Phi_i})\cup \{v_{\wedge}\}$, 
    $ E_\Phi=(\cup_{i=1}^nE_{\Phi_i})\cup \{ (v_{\wedge},\bot,v_{\text{root},\Phi_i} )\mid i=1,\dots,n\}$, 
    and $v_{\text{root},\Phi}=v_\wedge$ , 
    where $v_\wedge$ is a new $\wedge$-node that does not exist in each $\mathcal{T}_{\Phi_i}$; 
    \item 
    If $\Phi = \mathbf{G}_{[a,b]}\Phi'$, 
    then 
    $ V_\Phi= V_{\Phi'}\cup \{v_{\mathbf{G}}\}$, 
    $ E_\Phi= E_{\Phi'} \cup \{ (v_{\mathbf{G}},\bot,v_{\text{root},\Phi'}) \}$, 
    and $v_{\text{root},\Phi}=v_\mathbf{G}$, 
    where $v_\mathbf{G}$ is a new $\mathbf{G}$-node does not exist in   $\mathcal{T}_{\Phi'}$ and it is associated with time interval $[a,b]$;
    \item 
    If $\Phi = \Phi_1\mathbf{U'}_{[a,b]}\Phi_2$, 
    then 
    $ V_\Phi= V_{\Phi_1}\cup V_{\Phi_2}\cup\{v_{\mathbf{U'}}\}$, 
    $ E_\Phi= E_{\Phi_1} \cup E_{\Phi_2} \cup\{ (v_{\mathbf{G}},l,v_{\text{root},\Phi_1}),(v_{\mathbf{G}},r,v_{\text{root},\Phi_2} )\}$, 
    and $v_{\text{root},\Phi}=v_{\mathbf{U}'}$ , 
    where $v_{\mathbf{U}'}$ is a new $\mathbf{U}'$-node does not exist in $\mathcal{T}_{\Phi_1}$ or $\mathcal{T}_{\Phi_2}$,  and it is also associated with time interval $[a,b]$;
    \item 
    If $\Phi = x \in \mathcal{H}$, then 
    $ \mathcal{T}_\Phi$ only contains a single $\mathcal{H}$-node associated with predicate $x \in \mathcal{H}$. 
\end{itemize}
\end{definition}

For each node $v \in V_\Phi$, we denote by $[a^v, b^v]$ its \textbf{associated   time interval}, with $a^v = b^v = 0$ when $v$ is a $\wedge$-node or an $\mathcal{H}$-node. 
Also, for each $\mathcal{H}$-node $v\in V_\mathcal{H}$, we denote by $\mathcal{H}^v$ the associated predicate region. 
Hereafter, we will omit the subscript $\Phi$ and refer to $\mathcal{T} = (V, L, E, v_{\text{root}})$ as the syntax tree of the formula $\Phi$ when the context is clear. 
For $\star \in \{\wedge, \mathbf{G}, \mathbf{U}', \mathcal{H}\}$, we denote by $V_\star \subseteq V$ the set of all $\star$-nodes in $\mathcal{T}$. Furthermore, let 
\[
V = \{v_1, v_2, \dots, v_{m}\},
\]
and we assume that all $\mathcal{H}$-nodes are ordered as the first $h \geq 1$ elements, i.e., $V_\mathcal{H} = \{v_1, \dots, v_h\}$.

\subsection{Satisfaction Vector}
As discussed, for each node $v \in V$ in the syntax tree, it evaluates its descendants nodes within the time interval $[a^v, b^v]$. Note that this interval is relative to the local perspective, as $v$ may have ancestor nodes in the tree corresponding to outer operators whose time intervals also apply to $v$. Therefore, to determine the absolute time interval for evaluating node $v$ from a global perspective, one must consider the local time intervals of all its  ancestors.

Formally, for each node $v \in V$, we denote by $\textsf{ances}(v) = \{v' \in V \mid v \prec v'\}$ the set of all its ancestors. This set includes all nodes on the unique path from the root node to $v$ in $\mathcal{T}$, excluding $v$ itself. Based on this, we introduce the following definition of \textit{evaluation horizon}.

\begin{definition}[\bf Evaluation Horizons]\upshape
Let $\mathcal{T}$ be the syntax tree of STL formula $\Phi$ and $v\in V$ be a node.
Then the \emph{evaluation horizon} of node $v$  is defined by 
\[\textstyle
[\init^v,\endt^v]=[ \sum_{v'\in \textsf{ances}(v)}  a^{v'}, \sum_{v'\in \textsf{ances}(v)}  b^{v'}   ].
\]
\end{definition}

For example,   consider the following STL formula 
\begin{equation}\label{example}  
\Phi=((\mathbf{G}_{[1,3]}x\in\mathcal{H}_1)\mathbf{U'}_{[2,5]}x\in\mathcal{H}_2)\wedge(\mathbf{G}_{[3,7]}x\in\mathcal{H}_3).
\end{equation}
Its syntax tree $\mathcal{T}$ is shown in Fig.~\ref{fig:exm-tree1}.
For $\mathcal{H}$ node $v_1$, its evaluation horizon is 
$[\init^{v_1},\endt^{v_1}]=[2+1=3,5+3=8]$.

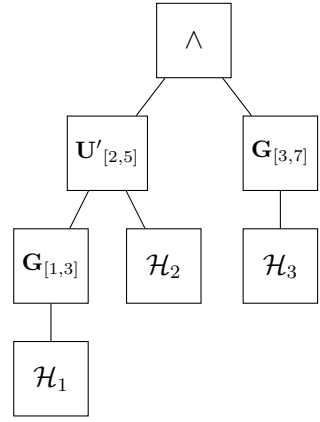
\begin{figure}[t]
    \centering
    \begin{tikzpicture}
    [
        level distance=1.5cm, 
        level 1/.style={sibling distance=2.3cm}, 
        level 2/.style={sibling distance=1.5cm}, 
        level 3/.style={sibling distance=1cm}
    ]
        \node[draw, rectangle, minimum size=1.1cm, scale=0.9, font=\large] {$\wedge$}
            child {node[draw, rectangle, minimum size=1.1cm, scale=0.9] {$\mathbf{U'}_{[2,5]}$}
                child {node[draw, rectangle, minimum size=1.1cm, scale=0.9] {$\mathbf{G}_{[1,3]}$}
                    child {node[draw, rectangle, minimum size=1.1cm, scale=0.9, font=\large] {$\mathcal{H}_1$}}
                }
                child {node[draw, rectangle, minimum size=1.1cm, scale=0.9, font=\large] {$\mathcal{H}_2$}}
            }
            child {node[draw, rectangle, minimum size=1.1cm, scale=0.9] {$\mathbf{G}_{[3,7]}$}
                child {node[draw, rectangle, minimum size=1.1cm, scale=0.9, font=\large] {$\mathcal{H}_3$}}
            };
    \end{tikzpicture}
    \hspace{1cm}
    \caption{Syntax Tree of STL formula ~\eqref{example}}\label{fig:exm-tree1}
\end{figure}

Therefore, for each time instant $t \in [\init^v, \endt^v]$, it is meaningful to discuss the satisfaction status of node $v$. In general, there are three possible satisfaction statuses for a node:
(i) 
\textbf{satisfied}, denoted by ``$1$"; 
(ii) 
\textbf{violated}, denoted by ``$0$"; and 
(iii) 
\textbf{uncertain}, denoted by ``$?$". 
The last case arises when there is insufficient information to evaluate the node. For example, consider the formula $\mathbf{G}_{[10,12]}x \in \mathcal{H}$. 
If the current time instant is $k = 8$, then the satisfaction status of the $\mathcal{H}$-node for $t = 10, 11, 12$ is uncertain because the evaluation horizon of the $\mathcal{H}$-node has not yet been reached. 
However, if at time instant $k = 10$, the current state $x_k$ falls within the region $\mathcal{H}$, then the satisfaction status of the $\mathcal{H}$-node at $t = 10$ becomes $1$. However, the satisfaction status for future instants $t = 11, 12$ remains uncertain.

To capture this, we define the satisfaction status for each time instant within the evaluation horizon of a node as the \emph{satisfaction vector}, as follows.

\begin{definition}[\bf Satisfaction Vectors]\upshape
Let $\mathcal{T}$ be the syntax tree of STL formula $\Phi$ and $v\in V$ be a node with  evaluation horizon $[\init^v, \endt^v]$. 
Then a \emph{satisfaction vector} of node $v$ is a vector of form
\[
\imath=(\imath[\init^v],\imath[\init^v+1],\dots, \imath[\endt^v]) \in \{0,1,?\}^{H},
\] 
where $H=\endt^v-\init^v+1$ is the length of its evaluation horizon and 
$\imath[t],t=\init^v,\dots,\endt^v$ denotes its $t$-th element. 
\end{definition}
In the above definition, we count the first element of $\imath$ starting from $\imath[\init^v]$ rather than $\imath[1]$ to align with the absolute time instant from the perspective of the root node. This ensures consistency between the indexing of the satisfaction vector and the actual time instants being evaluated.  



For each node, its satisfaction vector is essentially determined by the   vectors of its children nodes. Through an inductive argument based on the tree structure, once the satisfaction vector of each $\mathcal{H}$-node is known, we can compute the satisfaction vectors of all non-leaf nodes. Therefore, we refer to the satisfaction vector of an $\mathcal{H}$-node as a \emph{basic vector}, as it serves as the foundation for \emph{inducing vectors} for other nodes. This leads to the following definitions.

\begin{definition}[\bf Basic  Vectors]\upshape
A \emph{basic set} of satisfaction vectors is a tuple of satisfaction vectors of the form
\[
I = (\imath^{v_1}, \dots, \imath^{v_h}) 
\in 
\{0, 1, ?\}^{H_1} \times \cdots \times \{0, 1, ?\}^{H_h},
\] 
where 
$V_\mathcal{H} = \{v_1, \dots, v_h\}$ is the set of all $\mathcal{H}$-nodes
and  for each $i = 1, \dots, h$, $H_i = \endt^{v_i} - \init^{v_i} + 1$ represents the length of the evaluation horizon of  $v_i$. 
\end{definition}

Hereafter, we will maintain basic vectors as time-updated information. For each time instant \( k = 0,1,\dots,T \), a basic set \( I_k = (\imath^{v_1}_k, \dots, \imath^{v_h}_k) \) must satisfy the following condition for each \( v \in V_\mathcal{H} \) and \( t = \init^v,\dots,\endt^v \):
\begin{itemize}
    \item 
    If \( t \!\geq\! k \), then \( \imath^v_k[t] \!=\; ? \) (unknown status for futures); and 
    \item 
    If \( t \!<\! k \), then \( \imath^v_k[t] \!\neq\; ? \) (determined status for pasts).
\end{itemize}
This reflects the  constraint that satisfaction status cannot be evaluated for future time instants, while for past instants the status must be resolved to either 0 (violated) or 1 (satisfied). We denote by  $\mathcal{I}_k$ the set of all potential basic vectors at instant $k$ satisfying the above two constraints.

\begin{definition}[\bf Induced  Vectors]\upshape \label{def-5}
Given a basic set of satisfaction vectors $I = (\imath^{v_1}, \dots, \imath^{v_h}) $,  
the \emph{induced set of satisfaction vectors} for the remaining nodes, denoted by  
$I^+=(\imath^{v_{h+1}},\dots, \imath^{v_m})$,  
is defined recursively by: 
for each $v\in \{v_{h+1},\dots,v_m\}$,   we have
\begin{itemize}
    \item 
    If $v\in V_{\wedge}$, then for each $t=\init^v,\dots,\endt^v$, we have
    \[
     \imath^v[t] = 
     \left\{
    \begin{array}{cl} 
     0 &\quad  \text{if } \exists v'\in \textsf{child}(v):   \imath^{v'}[t]=0 \\
     1  &\quad \text{if } \forall v'\in \textsf{child}(v):  \imath^{v'}[t]=1\\     
     ?  &\quad \text{otherwise}  \\
    \end{array}
\right..
    \]   
    \item 
    If $v\in V_{\mathbf{G}}$,  then for each $t=\init^v,\dots,\endt^v$, we have
    \[
     \imath^v[t] = 
     \left\{
    \begin{array}{cl} 
     0  &\quad  \text{if } \exists t'\in [a^v,b^v]:  \imath^{v'}[t+t']=0 \\
     1  &\quad \text{if }  \forall t' \in [a^v,b^v]:  \imath^{v'}[t+t']=1\\     
     ?  &\quad \text{otherwise}  \\
    \end{array}
\right..
    \]   
    where $v'$ is the unique child node of $v$.
    \item 
    If $v\in V_{\mathbf{U}'}$,  then for each $t=\init^v,\dots,\endt^v$, we have
    \[
     \imath^v[t] = 
     \left\{
    \begin{array}{cl} 
     0  &\quad  \text{if } \!\!\!\!\!\!\!\! 
     \begin{array}{cc}
          &   (\forall t'\in [a^v,b^v])(\imath^{v_r}[t+t']\!=\!0) \\
          &      \textbf{ or}\\
          & \left[\!\!\!\!\!\!\!\!\! 
     \begin{array}{ll}
      & (\forall t'\!\in\! [a^v,b^v]: \imath^{v_r}[t+t']\!=\!1)\\
      & (\exists t''\!\in\! [a^v,t'])(\imath^{v_l}[t+t'']\!=\!0)\\  
     \end{array}\!\!\!
     \right] 
     \end{array}\vspace{5pt}\\
     1  &\quad \text{if }  
      \left[\!\!\!\!\!\!\!\!\! 
     \begin{array}{ll}
      & (\exists t'\!\in\! [a^v,b^v]: \imath^{v_r}[t+t']\!=\!1)\\
      & (\forall t''\!\in\! [a^v,t'])(\imath^{v_l}[t+t'']\!=\!1)\\  
     \end{array}\!\!\!
     \right]\vspace{5pt}\\     
     ?  &\quad \text{otherwise}  \\
    \end{array}
\right.,
    \]   
    where $v_l$ and $v_r$ denotes the left and right child nodes of $v$, respectively.
\end{itemize}
\end{definition}

The intuition behind the above definition is essentially a reinterpretation of the semantics of STL formulae. Note that the induced vectors are well-defined because all nodes are ordered in the tree structure, where the basic vectors representing $\mathcal{H}$-nodes are leaves in the tree with no children. Therefore, given a basic set of satisfaction vectors, one can compute its induced vectors in a bottom-up manner, starting from the leaf nodes and progressing to the root node.

\section{Online Monitoring Algorithms}\label{sec:5}
\subsection{Evolution of Basic Vectors}
In order to track the progress of the STL formulae without storing the entire state trajectory, our approach is to maintain only the basic satisfaction vectors and update them recursively upon observing the new system state at each time instant. This recursive computation proceeds as follows:
\begin{itemize}
    \item 
    \textbf{Initialization: } 
    At time instant \(k = 0\), prior to observing any system state, the initial basic set is defined as:
    \[
    I_0 = (\imath^{v_1}_0, \imath^{v_2}_0, \dots, \imath^{v_h}_0), 
    \]
    where each \(\imath^{v_i}_0=(?,?,\dots,?)\) is a vector with all entries initialized to the uncertain status ``?". This reflects complete uncertainty about future satisfaction before any state observations are made.
    \item 
    \textbf{Online Update: } 
    At time instant \( k \), given the current basic satisfaction vector set 
    \( I_k = (\imath^{v_1}_k, \dots, \imath^{v_h}_k) \), 
    the update upon observing a new state \( x_k \) is defined as:
    \[
    I_{k+1} = (\imath^{v_1}_{k+1}, \dots, \imath^{v_h}_{k+1}) = \texttt{update}(I_k, x_k),
    \]
    where for each \( v_i \in \{v_1, \dots, v_h\} \) and \( t \in [\init^{v_i}, \endt^{v_i}] \), the updated satisfaction vector entry is:
\begin{equation}\label{eq:online_update} 
\imath^{v_i}_{k+1}[t] = 
\begin{cases}
\imath^{v_i}_k[t] & \text{if } t \neq k, \\
0 & \text{if } t = k \land x_k \notin \mathcal{H}^{v_i}, \\
1 & \text{if } t = k \land x_k \in \mathcal{H}^{v_i}.
\end{cases}
\end{equation}
Therefore, the uncertain status "?" in the basic vectors is resolved at time \(k\) based on the observed state \(x_k\). 
\end{itemize}

Let \(\mathbf{x}_{0:k-1} = x_0 x_1 \dots x_{k-1}\) be a state sequence of length \(k\). We denote by \(I(\mathbf{x}_{0:k-1})\) the basic set of satisfaction vectors reached recursively by the state sequence \(\mathbf{x}_{0:k-1}\) from \(I_0\).  
On the other hand, for any basic set \(I\), we define the set of state sequences of length \(k\) consistent with \(I\) as  
\[
\mathbf{x}_{0:k-1}^I = \{\mathbf{x}_{0:k-1} \in \mathcal{X}^k \mid I(\mathbf{x}_{0:k-1}) = I\}.
\]  
The following result demonstrates that the recursive computation of the basic set above indeed captures all task-relevant information from the complete state trajectory.

\begin{proposition}\label{prop-I}\upshape
Let $I$ be a basic set and $k$ be a time instant. 
For any two sequences 
$\mathbf{x}_{0:k-1}',\mathbf{x}_{0:k-1}''\in \mathbf{x}_{0:k-1}^I$
consistent with $I$, 
and any future sequence $\mathbf{x}_{k:T}=x_{k}x_{k+1}\cdots x_T$, we have 
\begin{equation}
\mathbf{x}_{0:k-1}'\mathbf{x}_{k:T}\models \Phi 
\quad \Leftrightarrow \quad 
\mathbf{x}_{0:k-1}''\mathbf{x}_{k:T}\models \Phi. 
\end{equation}
\end{proposition}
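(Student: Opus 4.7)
The plan is to show that the basic satisfaction vector $I_k=I(\mathbf{x}_{0:k-1})$ encodes exactly the predicate memberships of the observed prefix at all time instants relevant to the evaluation of $\Phi$, and that the satisfaction of $\Phi$ by a full trajectory $\mathbf{x}_{0:T}$ depends on that trajectory only through these memberships. Once both facts are in hand, the proposition is immediate: two prefixes $\mathbf{x}_{0:k-1}'$ and $\mathbf{x}_{0:k-1}''$ that yield the same basic set must induce the same membership record, and appending the common suffix $\mathbf{x}_{k:T}$ produces two full trajectories whose complete basic sets coincide, so they are satisfaction-equivalent for $\Phi$.

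The first step is a simple invariant, proved by induction on $k$ from the update rule \eqref{eq:online_update}. For every $v_i\in V_\mathcal{H}$ and every $t\in[\init^{v_i},\endt^{v_i}]$, I will show that
\[
\imath^{v_i}_k[t]=
\begin{cases}
1 & t<k \text{ and } x_t\in\mathcal{H}^{v_i},\\
0 & t<k \text{ and } x_t\notin\mathcal{H}^{v_i},\\
? & t\geq k.
\end{cases}
\]
The base case $k=0$ holds by the initialization $\imath^{v_i}_0=(?,\dots,?)$, and the inductive step follows because the update rule preserves the entries for $t\neq k$ and resolves the entry at $t=k$ exactly by testing $x_k\in\mathcal{H}^{v_i}$. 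The only subtlety is that the update is only meaningful when $k$ lies in the evaluation horizon $[\init^{v_i},\endt^{v_i}]$; outside this range the entry is either absent or stays $?$, which is consistent with the invariant.

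The second step is to relate the induced satisfaction vector at the root to the STL semantics. Unrolling Definition \ref{def-5} by structural induction on the syntax tree, I will show that for every node $v$ with children's satisfaction vectors correctly reflecting their corresponding subformulae, the recursion reproduces exactly the semantics of $\wedge$, $\mathbf{G}_{[a,b]}$, and $\mathbf{U}'_{[a,b]}$. Consequently, if one applies the update rule all the way through the full trajectory $\mathbf{x}=\mathbf{x}_{0:T}$ to obtain a final basic set $I(\mathbf{x})$, the invariant above guarantees that every basic entry is resolved to $0$ or $1$ (no $?$ remains), and the induced root entry $\imath^{v_{\text{root}}}[0]$ equals $1$ if and only if $\mathbf{x}\models\Phi$. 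In particular, the satisfaction value depends on $\mathbf{x}$ only through the tuple of predicate memberships $(\mathbf{1}[x_t\in\mathcal{H}^{v_i}])_{v_i\in V_\mathcal{H},\, t\in[\init^{v_i},\endt^{v_i}]}$.

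With these two facts, the conclusion is direct. Assume $I(\mathbf{x}_{0:k-1}')=I(\mathbf{x}_{0:k-1}'')$. By the invariant, for every $v_i$ and every $t\in[\init^{v_i},\endt^{v_i}]$ with $t<k$ we have $x_t'\in\mathcal{H}^{v_i}\Leftrightarrow x_t''\in\mathcal{H}^{v_i}$. The suffix $\mathbf{x}_{k:T}$ is shared, so memberships also agree for $t\geq k$. Therefore the final basic sets $I(\mathbf{x}_{0:k-1}'\mathbf{x}_{k:T})$ and $I(\mathbf{x}_{0:k-1}''\mathbf{x}_{k:T})$ coincide, and by the semantic characterization the induced root entries are equal, which gives the desired equivalence. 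I expect the main obstacle to be the clean statement and proof of the semantic correctness of the induced vectors in step two, in particular verifying that the evaluation horizon $[\init^v,\endt^v]$ is exactly the range of local times at which node $v$'s truth value can influence the root, so that the bottom-up recursion is both well-defined and sound; the bookkeeping on summing ancestral intervals $a^{v'},b^{v'}$ is straightforward but needs to be stated carefully to align with the $\mathbf{U}'$ semantics.
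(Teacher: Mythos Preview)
Your proposal is correct and follows essentially the same route as the paper. The paper first proves, by structural induction on the syntax tree, that $(\mathbf{x}_{0:k},t)\models\Phi^{v}\Leftrightarrow\imath^{v}_{I(\mathbf{x}_{0:k})}[t]=1$ for every node $v$ (your Step~2), specializes this to the root to obtain Proposition~2, and then derives Proposition~1 by noting that $I(\mathbf{x}_{0:k-1}')=I(\mathbf{x}_{0:k-1}'')$ together with the common suffix forces $I(\mathbf{x}_{0:k-1}'\mathbf{x}_{k:T})=I(\mathbf{x}_{0:k-1}''\mathbf{x}_{k:T})$; your Step~1 invariant just makes this last implication explicit, which the paper leaves implicit in the determinism of the update rule.
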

\begin{proof}
    Due to space constraint, all proofs in the paper are omitted
    and they are available in the  supplementary materials {\footnotesize\url{https://xiangyin.sjtu.edu.cn/25CDC-STL.pdf}}
\end{proof}\vspace{3pt}

Based on the above result, it is meaningful to write\vspace{-3pt}  
\begin{equation}  
    \mathbf{x}_{0:k-1}^I \mathbf{x}_{k:T} \models \Phi  
\end{equation}  
whenever $\mathbf{x}_{0:k-1}' \mathbf{x}_{k:T} \models \Phi$ holds for some (or equivalently, for all, due to Proposition~\ref{prop-I}) $\mathbf{x}_{0:k-1}' \in \mathbf{x}_{0:k-1}^I$.

Given a basic set of satisfaction vectors \( I \), its induced set of satisfaction vectors is defined as \( I^+ = (\imath^{v_{h+1}}, \dots, \imath^{v_m}) \).  
We call the satisfaction vector corresponding to the root node \( v_{\text{root}} \) in \( I^+ \) the \emph{root satisfaction vector}, denoted by \( \imath^{\text{root}}_I \in I^+ \).   
With lose of generality, we assume that the root node is always a $\wedge$-node, whose evaluation horizon is $[0,0]$. 
The following result demonstrates that this root vector fully captures the satisfaction status of the entire STL formula.  
\begin{proposition}\label{prop-2}\upshape
For any state sequence $\mathbf{x}_{0:k}$, we have 
\begin{equation}
  \mathbf{x}_{0:k}\models \Phi  \quad\Leftrightarrow \quad   \imath^{\text{root}}_{I(\mathbf{x}_{0:k})}[0]=1.
\end{equation} 
\end{proposition}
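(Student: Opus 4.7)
The plan is to prove the proposition by structural induction on the syntax tree $\mathcal{T}_\Phi$, established as a corollary of a stronger three-valued claim about every node. Specifically, writing $\Phi_v$ for the sub-formula rooted at node $v$, I would prove that for every $v \in V$ and every absolute time $t \in [\init^v,\endt^v]$, and writing $I_k = I(\mathbf{x}_{0:k-1})$ for the basic set reached after observing the prefix,
\begin{itemize}
    \item $\imath^v_{I_k}[t] = 1$ iff $(\mathbf{x}_{0:k-1}\mathbf{x}_{k:T}, t) \models \Phi_v$ for every suffix $\mathbf{x}_{k:T} \in \mathcal{X}^{T-k+1}$;
    \item $\imath^v_{I_k}[t] = 0$ iff $(\mathbf{x}_{0:k-1}\mathbf{x}_{k:T}, t) \not\models \Phi_v$ for every such suffix;
    \item $\imath^v_{I_k}[t] = \,?$ in all remaining cases (i.e.\ both outcomes are realizable).
\end{itemize}
Since the root is by assumption a $\wedge$-node with $[\init^{\text{root}},\endt^{\text{root}}]=[0,0]$ and $\Phi_{v_{\text{root}}} = \Phi$, specializing clause (i) to $t=0$ immediately yields the proposition.

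For the base case I would take $v$ to be an $\mathcal{H}$-node. The update rule \eqref{eq:online_update} sets $\imath^v_{I_k}[t]$ to $1$ or $0$ according to whether $x_t \in \mathcal{H}^v$ when $t < k$, and leaves the entry at ``?" when $t \geq k$. Since the STL semantics for $x \in \mathcal{H}^v$ at instant $t$ depends only on $x_t$, a case split on whether $t<k$ verifies the three-valued claim directly: past entries are determined and suffix-independent, while for future entries both truth values are realizable by selecting $x_t \in \mathcal{H}^v$ or $x_t \notin \mathcal{H}^v$, using the (assumed nonempty) state space $\mathcal{X}$.

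For the inductive step I would proceed case by case on the type of $v$, using Definition~\ref{def-5}. The $\wedge$-case follows pointwise from the Boolean semantics of conjunction applied to the children's vectors. The $\mathbf{G}$-case follows by unfolding the universal quantification over $t' \in [a^v,b^v]$ against the (inductively established) meaning of the single child's vector entries at absolute times $t+t'$. The $\mathbf{U}'$-case reduces, again via Definition~\ref{def-5}, to a case analysis over the existential witness $t' \in [a^v,b^v]$ for the right child together with the universal requirement on the left child over $[a^v,t']$; matching the three recursive clauses with the semantics of $\mathbf{U}'_{[a^v,b^v]}$ gives both directions. Proposition~\ref{prop-I} is used implicitly to guarantee that the conclusion depends only on the basic set $I_k$ rather than on the particular prefix producing it.

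I expect the main obstacle to be the ``otherwise'' clause for the $\mathbf{U}'$-node: one must show that whenever Definition~\ref{def-5} assigns ``?", there truly exist two suffix extensions of $\mathbf{x}_{0:k-1}$, one forcing $(\mathbf{x},t)\models \Phi_1 \mathbf{U}'_{[a^v,b^v]}\Phi_2$ and one forcing its negation. This requires a short constructive argument: by the inductive hypothesis, any child entry currently equal to ``?" can be driven to either $0$ or $1$ by an appropriate choice of future states, so one can independently witness both the ``$0$'' and ``$1$'' branches of the $\mathbf{U}'$-clause. The remaining cases reduce to routine bottom-up computations, and the proposition follows once the inductive claim has been propagated up to the root.
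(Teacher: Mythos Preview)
Your approach coincides with the paper's: both argue by structural induction on the syntax tree, proving a per-node equivalence $(\mathbf{x}_{0:k},t)\models\Phi^{v}\Leftrightarrow \imath^{v}_{I(\mathbf{x}_{0:k})}[t]=1$ (which the paper isolates as a separate auxiliary proposition) and then specializing to $v=v_{\text{root}}$, $t=0$. Your three-valued strengthening is more than what is needed for Proposition~\ref{prop-2} itself, and the paper's leaner two-valued invariant sidesteps the $\mathbf{U}'$ ``otherwise'' obstacle you flag; note in particular that your ``independently witness'' sketch is delicate, since distinct $\mathcal{H}$-nodes evaluated at the same future instant are all determined by a single state $x_t$ and hence cannot in general be resolved independently.
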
 
For simplicity, we will write $\imath^{\text{root}}_{I(\mathbf{x}_{0:k})}= 0,1,?$ whenever $\imath^{\text{root}}_{I(\mathbf{x}_{0:k})}[0]=0,1,?$ as the root vector only has one element. 

\subsection{Model-Predictive Monitor}
 By recursively maintaining the basic set \( I_k = I(\mathbf{x}_{0:k-1}) \), we can draw the following conclusions about the current status of the entire specification formula:

\begin{itemize}
    \item 
    If \( \imath^{\text{root}}_{I_k} = 0 \), the specification has been violated, and the monitor should output \( \mathcal{M}(\mathbf{x}_{0:k-1}) = \texttt{vio} \);
    \item If \( \imath^{\text{root}}_{I_k} = 1 \), the specification has been satisfied, and no further monitoring is required.
\end{itemize}
These conclusions are independent of the system dynamics, as the satisfaction status is fully determined in these cases. However, when \( \imath^{\text{root}}_{I_k} = ? \), both \(\texttt{vio}\) and \(\texttt{feas}\) remain possible outcomes, requiring analysis of the system dynamics. This leads to the following key definition.
  
\begin{definition}[\bf $I$-Determined Feasible Sets] \upshape
Let   $\Phi$ be an STL formula,  $k \in [0, T]$ be a time instant and 
$I$  be a basic set. 
Then the \emph{$I$-determined feasible set} at instant $k$, 
denoted by $X_k^I \subseteq \mathcal{X}$, is the set of states from which there exists a solution $\mathbf{u}_{k:T-1}$ that satisfies $\Phi$ 
given the current basic set $I$, i.e.,
\begin{equation}
    X_k^I=\left\{x_k \hspace{-0.05cm} \in \hspace{-0.05cm} \mathcal{X}\left|\begin{matrix}\exists \mathbf{u}_{k:T-1} \hspace{-0.1cm} \in\mathcal{U}^{T-k} \\\mathrm{s.t.~}\mathbf{x}_{0:k-1}^I x_k\xi_f(x_k,\mathbf{u}_{k:T-1})\models{\Phi}\end{matrix}\right.\right\}
\end{equation} 
\end{definition}
 
Based on the above notion, 
now we present our main online  monitoring algorithm as shown in Algorithm 1. 
The algorithm initializes with the time instant $k=0$  and the initial basic set $I=I_0$ (line 1). 
The monitoring process iterates as long as the STL formula remains unresolved  (line 2). At each iteration, the current state $x_k$ is read (line 3) and checked against the feasible set $X_k^I$. 
If $x_k \not\in X_k^I$, the algorithm immediately terminates with a violation decision (lines 4-6). Otherwise, the monitoring decision is set to $\mathcal{M}=\texttt{feas}$, and the basic set $I$ is recursively updated by $\texttt{update}(I, x_k)$ (lines 7-9). 
The time instant is then increased (line 10), and the loop continues until either a violation is detected or the STL formula is satisfied.

The correctness of Algorithm is established as follows. 
\begin{theorem}\upshape
The online monitor $\mathcal{M}$ defined by Algorithm~1 indeed solves Problem~1. 
\end{theorem}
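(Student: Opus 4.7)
The plan is to reduce the correctness of Algorithm~1 to the combination of Propositions~1 and~2 together with the definition of the $I$-determined feasible set $X_k^I$. The central claim I would prove is a loop invariant: at the start of iteration $k$ of the while loop, the stored basic set $I$ satisfies $I = I(\mathbf{x}_{0:k-1})$, where $\mathbf{x}_{0:k-1}$ is the prefix of states observed so far.

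I would establish this invariant by induction on $k$. The base case $k=0$ is immediate because $I_0$ has every entry equal to ``?'', matching the empty prefix by construction. For the inductive step, the update rule in equation~(11) is precisely the recursive rule defining the evolution of basic sets from a newly observed state, so $\texttt{update}(I, x_k) = I(\mathbf{x}_{0:k})$ whenever $I = I(\mathbf{x}_{0:k-1})$.

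With the invariant in place, the pointwise correctness of the feasibility check at line~4 follows directly. Since the observed prefix $\mathbf{x}_{0:k-1}$ itself belongs to the equivalence class $\mathbf{x}_{0:k-1}^I$, Proposition~1 permits substituting it as a representative when evaluating any extension. Unfolding the definition of $X_k^I$ then yields
\[
x_k \in X_k^I \;\Longleftrightarrow\; \exists\,\mathbf{u}_{k:T-1}\in\mathcal{U}^{T-k}:\; \mathbf{x}_{0:k-1}\,x_k\,\xi_f(x_k,\mathbf{u}_{k:T-1}) \models \Phi,
\]
which is exactly the definition of feasibility of $\mathbf{x}_{0:k}$. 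Since feasibility and violation partition $\mathcal{X}^{*}$ by definition, the contrapositive gives $x_k \notin X_k^I \Leftrightarrow \mathbf{x}_{0:k}$ is violated. Combining this equivalence with the assignment logic of lines~4--9 establishes both soundness and completeness of the verdict whenever the algorithm acts on the $X_k^I$ check.

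The main subtle step, rather than a serious obstacle, is treating the alternative termination paths through the ``formula unresolved'' guard of line~2. If the loop exits because $\imath^{\text{root}}_{I_k} = 1$, Proposition~2 certifies that the prefix already satisfies $\Phi$ under every continuation, hence it is feasible, matching the retained verdict \texttt{feas}. If instead the guard would exit with $\imath^{\text{root}}_{I_k} = 0$, a straightforward dual of Proposition~2 implies that no continuation of the prefix can satisfy $\Phi$; this forces $x_{k-1} \notin X_{k-1}^{I_{k-1}}$ at the previous iteration, so the algorithm must already have output \texttt{vio} and halted before this guard is ever evaluated. Verifying this exclusion rigorously via the semantics of the induced satisfaction vectors in Definition~5 is the one place that requires a small extra argument beyond the results already proved in the paper.
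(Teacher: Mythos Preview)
The paper does not actually supply a proof of Theorem~1: the in-body proof environment defers all arguments to supplementary material, and the appended supplementary section only establishes Propositions~1--3 (with Proposition~3 as an auxiliary lemma). There is thus no paper proof against which to compare your proposal directly.

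On its own merits your argument is correct and is precisely the route the paper's scaffolding invites. The loop invariant $I = I(\mathbf{x}_{0:k-1})$ is immediate from the update rule; the equivalence $x_k\in X_k^I \Leftrightarrow \mathbf{x}_{0:k}$ feasible follows at once from the definition of $X_k^I$ together with Proposition~1 (which justifies taking the observed prefix as the representative of its equivalence class); and Proposition~2 handles the root-equals-$1$ exit. Your treatment of the root-equals-$0$ branch---showing it is pre-empted by $x_{k-1}\notin X_{k-1}^{I_{k-1}}$ at the preceding iteration---is also correct, and the edge case $k=0$ is harmless since $I_0$ consists entirely of ``?''\ entries, forcing $\imath^{\text{root}}_{I_0}=\,?$ by Definition~5. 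The dual of Proposition~2 that you identify as the one missing ingredient (namely $\imath^{\text{root}}_{I(\mathbf{x}_{0:k})}=0 \Rightarrow \mathbf{x}_{0:k}\not\models\Phi$) indeed follows by the same structural induction used for Proposition~3, so your description of it as ``a small extra argument'' is accurate.
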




\begin{algorithm2e}[t]
    \caption{Online Monitoring algorithm}
    \renewcommand{\algorithmcfname}{Upward update111}
    \label{alg:prune_composed_DFA}
    \KwIn{Feasible sets $X_k^I$ computed offline}
    \KwOut{Online monitoring decision $\mathcal{M}(\mathbf{x}_{0:k})$}
    $k \gets 0$, $I \gets I_0$\\
    \While{$\imath^{\text{\upshape root}}_I$ $\neq 1$}
    {
        observe a new current state $x_k$\\
        \If{$x_k \notin X_k^I$}
            {$\mathcal{M}$ issues ``\texttt{vio}"\\
            \Return {``$\Phi$ is violated"}}
        \Else
            {$\mathcal{M}$ issues ``\texttt{feas}"\\
            $I\gets \texttt{update}(I, x_k)$}
        $k \gets k + 1$\\
    }
\end{algorithm2e}

\subsection{Offline Computation of Feasible Sets}
The proposed online monitoring algorithm utilizes pre-computed all feasible sets $X_k^I$ at each time instant. 
This subsection details the offline computation of these sets.

Note that, at each time instant $k$, we only need to consider those basic vectors for which the entire STL formula $\Phi$ has not been violated. 
Therefore, we define
\[
\mathbb{I}_k = \{I\in \mathcal{I}_k \mid \imath^{\text{root}}_I \not= 0\}
\]
the set of \textbf{feasible basic vectors} at time instant $k$.  

Furthermore,  the basic vector set cannot be updated arbitrarily in the next time instant as it should be consistent with the existing history. 
Therefore, 
let 
$I = (\imath^{v_1}, \dots, \imath^{v_h})\in\mathbb{I}_k$ 
be a basic vector set for time instant $k$
and 
$I_{s} = (\imath_s^{v_1}, \dots, \imath_s^{v_h})\in\mathbb{I}_{k+1}$
be a basic vector set for time instant $k+1$. 
We say $I_s\in\mathbb{I}_{k+1}$ is a \textbf{successor basic set} of $I\in\mathbb{I}_k$ at instant $k$ 
if
\[
\forall v\in V_{\mathcal{H}}, \forall t\in [\init^v,\min(k-1,\endt^v)]:  \imath^{v}[t]=\imath_s^{v}[t].
\]
We denote by $\operatorname{succ}(I, k)\subseteq \mathbb{I}_{k+1}$ the set of all successor basic sets of $I$ at instant $k$.
Moreover, when the basic set evolves from $I$ to $I_s$ at instant $k$, 
it must reach a state $x_k$ consistent to the update rule. 
We define the region that $x_k$ needs to be in as \textit{consistent region}.

\begin{definition}[\bf Consistent Regions]\upshape 
Let $I = (\imath^{v_1}, \dots, \imath^{v_h})\in \mathbb{I}_k$ be a basic vector at instant $k$, and $I_{s} = (\imath_s^{v_1}, \dots, \imath_s^{v_h})\in \operatorname{succ}(I, k)$ be a successor basic set of $I$. In order to trigger the evolution of the basic set from $I$ to $I_s$ at instant $k$, the system should be in the consistent region $H_k(I, I_s)$, which is defined by
\begin{equation}
H_k(I, I_s) = \bigcap_{i=1}^h {H_i}_k,
\end{equation}
where 
\begin{equation*}
{H_i}_k = 
\begin{cases} 
\mathcal{X} \setminus \mathcal{H}^{v_i} & \text{if } \imath^{v_i}_s[k] = 0 \\
\mathcal{H}^{v_i} & \text{if } \imath^{v_i}_s[k] = 1 \\
\mathcal{X} & \text{if } k > \textup{$\endt^{v_i}$} \;\text{or}\; k < \textup{$\init^{v_i}$}
\end{cases}
\end{equation*}
\end{definition}

Finally, in order to ensure that the prefix signal is always feasible, 
the feasible set $X^I_k$ needs to be able to reach region $X^{I_s}_{k+1}$ in one step. 
This is formalized by the \textit{one-step feasible set} defined as follows.

\begin{definition}[\bf One-Step Feasible Set] \upshape
Let $\mathcal{S} \subseteq \mathcal{X}$ be a
set of states representing the ``target region”. Then the
one-step feasible set of $\mathcal{S}$ is defined by
\begin{equation}
    \Upsilon(\mathcal{S})=\{x\in\mathcal{X}\mid\exists u\in\mathcal{U}\;\text{s.t.}\;f(x,u)\in\mathcal{S}\}.
\end{equation}
\end{definition}

We define $\mathbb{X}_k = \{X^I_k \mid I \in \mathbb{I}_k\}$ as the set of all feasible sets for instant $k$. 
Then our offline objective is to compute all possible  $\mathbb{X}_0,\mathbb{X}_1,\dots, \mathbb{X}_T$, which are used as a look-up table during the online monitoring process.
In terms of our computation of feasible regions, if the system is evolving from $I$ to $I_s$ and maintains the satisfiability of $I_s$ from instant $k+1$, then we know that the system should be in region $H_k(I, I_s) \cap \Upsilon(X_{k+1}^{I_s})$ at instant $k$. However, the basic set $I_s$ for the next instant depends on the current state of the system. Therefore, to compute $X_k^I$, we need to consider all possible successor sets $I_s \in \operatorname{succ}(I, k)$, and take the union of these regions. This is formalized by the following equation
\begin{equation}
X_k^I = \bigcup_{I_s \in \operatorname{succ}(I, k)} \left( H_k(I, I_s) \cap \Upsilon(X_{k+1}^{I_s}) \right).
\end{equation}

Based on the above equation, we compute all feasible sets across the entire horizon using a backward recursion procedure, as formalized in Algorithm 2. 
The algorithm initializes by setting the terminal feasible set \(X^{I}_{T+1} \gets \mathbb{R}^n\) for all \(I \in \mathbb{I}_{T+1}\) (lines 3–4). 
The recursion proceeds backward in time from \(k = T\) to \(k = 0\). If the STL formula is already satisfied (i.e., $\imath^{\text{root}}_I = 1$), then set $X_k^I \leftarrow \mathbb{R}^n$.
At each time step \(k\), the feasible set \(X^I_k\) is computed for every basic set \(I \in \mathbb{I}_k\) through two  operations: 
\begin{itemize}
    \item 
    Intersection with \(H_k(I, I_s)\) and \(\Upsilon(X^{I_s}_{k+1})\), and  
    \item 
     Union over all successor sets \(I_s \in \text{succ}(I, k)\) to ensure completeness. 
\end{itemize}
The resulting sets are aggregated into \(\mathbb{X}_k\), which   captures all feasible states at time \(k\) (lines 5–13).

\begin{algorithm2e}[t]
    \caption{Offline Computations of Feasible Sets}
    \renewcommand{\algorithmcfname}{Upward update111}
    \label{alg:prune_composed_DFA}
    \KwIn{STL formula $\Phi$}
    \KwOut{All potential sets $\{\mathbb{X}_k : k \in [0, T]\}$}
    \For{$k \in [0, T]$} 
        {$\mathbb{X}_k \gets \emptyset$}
    \For{$I\in \mathbb{I}_{T+1}$}
        {$X_{T+1}^I \gets \mathbb{R}^n$}
    $k \gets T$\\
    \While{$k \geq 0$}
    {
        \For{$I \in \mathbb{I}_k$}
        {
            \If{$\imath^{\text{\upshape root}}_I = 1$}
                {$X_k^I \gets \mathbb{R}^n$}
            \Else
                {$X_k^I\hspace{-0.08cm} \gets\hspace{-0.08cm} \bigcup_{I_s \in \operatorname{succ}(I, k)}\hspace{-0.08cm} \left( H_k(I,\hspace{-0.05cm} I_s) \hspace{-0.07cm}\cap\hspace{-0.07cm} \Upsilon(X_{k+1}^{I_s}) \right)$}
        }
        $\mathbb{X}_k \gets \mathbb{X}_k \cup \{X_k^I\}$\\
        $k \gets k - 1$
    }
\end{algorithm2e}



\section{Case Studies for Online Monitoring}\label{sec:6}
In this section, we illustrate our online monitoring algorithm with two cases. We implemented the above methods in Python language.\footnote{Our codes are available at {\url{https://github.com/sjtu-hantao/MPM4STLnested}}, where more details on the computations can be found.  }





\subsection{Building Temperatures}
We consider the problem of monitoring the temperature of a single zone building whose dynamic is \cite{jagtap2020formal}  
\[
x_{k+1}=x_k+\tau_s(\alpha_e(T_e-x_k)+\alpha_H(T_h-x_k)u_k),
\]
where \( x_k \in \mathcal{X} = [0, 45] \) is the zone temperature (°C) at step \( k \), 
\( u_k \in \mathcal{U} = [0, 1] \) is the heater valve’s normalized position, 
and \( \tau_s = 1 \) min is the sampling interval. 
The model parameters include the heater temperature \( T_h = 55^\circ \mathrm{C} \), ambient temperature \( T_e = 0^\circ \mathrm{C} \), and heat transfer coefficients \( \alpha_e = 0.06 \) (environmental) and \( \alpha_H = 0.08 \) (heater).

The temperature control system aims to maintain the zone temperature within the comfortable range of 20°C–25°C at all times, ensuring that any temperature deviation is corrected within 5 minutes during any 10-minute window. This requirement can be specified using the STL formula:
\[
\Phi=\mathbf{G}_{[0,10]}(\mathbf{F}_{[0,5]}x_k\in[20,25]).
\]
Fig.~\ref{fig:case-1} presents two temperature trajectories that are identical until time step $k=13$. 
For the black signal, the condition $x_{13}\in X^I_{13}$ holds, indicating that the control task remains feasible. 
However, for the red signal, $x_{13}\not\in X^I_{13}$, which means the specification will inevitably be violated. 
That is, no control input exists that could bring the temperature within the required range within the specified time window.

\begin{figure}[t]
    \centering
    \includegraphics[scale=0.28]{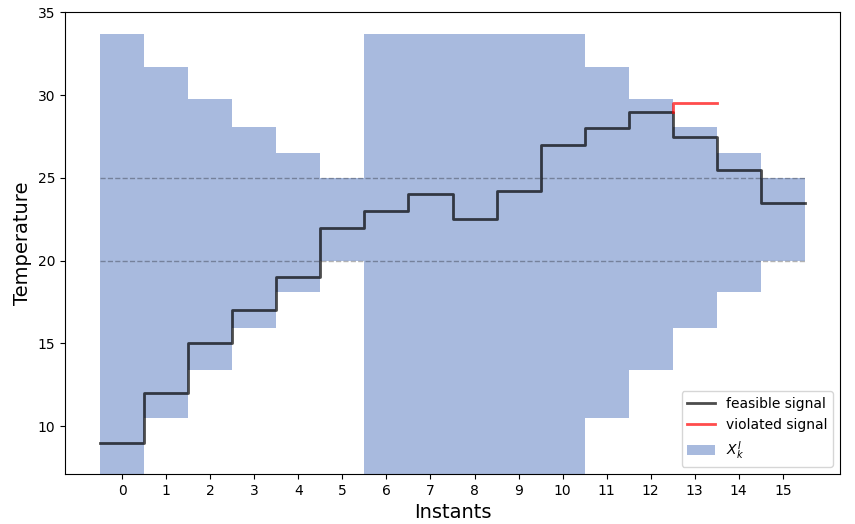}
    \caption{Two trajectories temperature control system.}
    \label{fig:case-1} 
\end{figure}
 
\subsection{Autonomous Robots}
We consider a simple autonomous robot whose dynamic model is given as follows
$$x_{k+1}=
\begin{bmatrix}
1 & 0 \\
0 & 1
\end{bmatrix}x_k+
\begin{bmatrix}
\tau_s & 0\\
0 & \tau_s
\end{bmatrix}u_k,$$
where state 
$ x_k \in \mathcal{X} = [0, 12]\times[0, 12] $ denotes the position of the robot at instant $ k $,
control input $ u_k \in \mathcal{U} = [-1, 1]\times[-1, 1] $ is the speed of the robot,
and $ \tau_s = 1s$  is the sampling time.
  
The objective of the robot is to  patrol both region $A_1$ and region $A_2$  in 6 seconds and 
stay in $A_2$ for at least 2 seconds. 
This task can be described by the following STL formula
$$\Phi=\mathbf{F}_{[0,6]}A_1\wedge\mathbf{F}_{[0,6]}(\mathbf{G}_{[0,2]}A_2),$$
where $A_{1}=(x{\in}[3,5])\wedge(y{\in}[3,5])$ and $A_{2}=(x{\in}[6,8])\wedge(y{\in}[6,8])$.

Let us consider two trajectories of the robot shown in Fig.~\ref{fig:case-2}. 
For the sake of clarity, in each figure, we only draw one feasible set at a certain instant. 
In the left figure, 
since $x_{0}\not\in X^I_{0}$, 
the monitor can claim initially that the task cannot be satisfied as it is too far away from the target regions. 
In the right figure, 
for $k=0,1,2,3$, the trajectory stays within the feasible region
and the monitor issues ``\texttt{feas}". 
Yet, we have $x_{4}\not\in X^I_{4}$. 
Therefore, the monitor issues ``\texttt{vio}" at $k=4$.

\begin{figure}[t]
    \centering
    \includegraphics[scale=0.39]{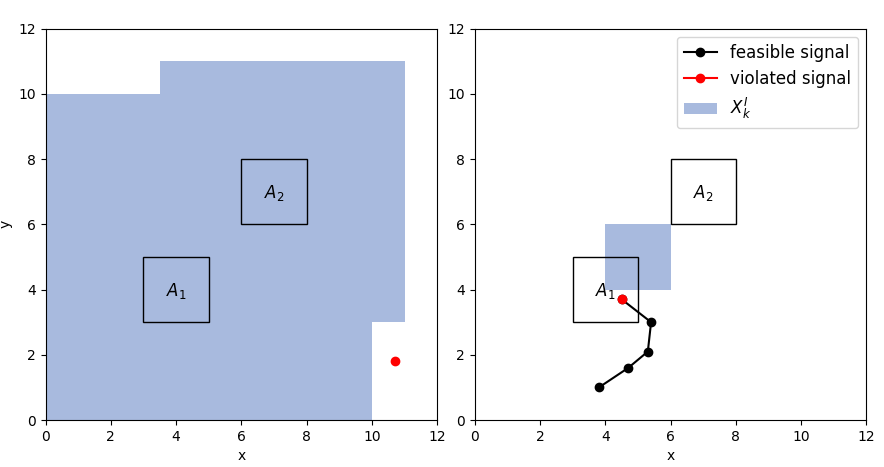}
    \caption{Two  trajectories that violate at different instants.}
    \label{fig:case-2}
\end{figure}



\section{Conclusions}\label{sec:7}
In this paper, we present a new model-based online monitoring algorithm for signal temporal logic specifications with nested temporal operators. 
Our approach utilizes the syntax tree as the fundamental information structure to characterize and dynamically update all relevant information for STL task progression.
The online monitoring process is highly efficient as it requires only basic set updates and membership checks. 
While  implemented in an offline manner, the feasible set computation remains challenging for highly nonlinear systems with long time horizons.
In the future,  we plan to enhance the framework by developing more efficient set representation techniques to optimize the offline computation of feasible sets.

\addtolength{\textheight}{-0cm}   



\bibliographystyle{plain} 
\bibliography{mpm}

\begin{thebibliography}{10}

\bibitem{abbas2022leveraging}
Houssam Abbas and Borzoo Bonakdarpour.
\newblock Leveraging system dynamics in runtime verification of cyber-physical
  systems.
\newblock In {\em International Symposium on Leveraging Applications of Formal
  Methods}, pages 264--278. Springer, 2022.

\bibitem{bartocci2018specification}
Ezio Bartocci, Jyotirmoy Deshmukh, Alexandre Donz{\'e}, Georgios Fainekos, Oded
  Maler, Dejan Ni{\v{c}}kovi{\'c}, and Sriram Sankaranarayanan.
\newblock Specification-based monitoring of cyber-physical systems: a survey on
  theory, tools and applications.
\newblock {\em Lectures on Runtime Verification: Introductory and Advanced
  Topics}, pages 135--175, 2018.

\bibitem{beg2018signal}
Omar~Ali Beg, Luan~V Nguyen, Taylor~T Johnson, and Ali Davoudi.
\newblock Signal temporal logic-based attack detection in dc microgrids.
\newblock {\em IEEE Transactions on Smart Grid}, 10(4):3585--3595, 2018.

\bibitem{bonnah2022runtime}
Ernest Bonnah and Khaza~Anuarul Hoque.
\newblock Runtime monitoring of time window temporal logic.
\newblock {\em IEEE Robotics and Automation Letters}, 7(3):5888--5895, 2022.

\bibitem{cairoli2023learning}
Francesca Cairoli, Luca Bortolussi, and Nicola Paoletti.
\newblock Learning-based approaches to predictive monitoring with conformal
  statistical guarantees.
\newblock In {\em International Conference on Runtime Verification}, pages
  461--487. Springer, 2023.

\bibitem{chen2020temporal}
G.~Chen, M.~Liu, and Z.~Kong.
\newblock Temporal-logic-based semantic fault diagnosis with time-series data
  from industrial internet of things.
\newblock {\em IEEE Transactions on Industrial Electronics}, 68(5):4393--4403,
  2020.

\bibitem{claudet2024novel}
T.~Claudet, D.~Martire, D.~Losa, F.~Sanfedino, and D.~Alazard.
\newblock A novel graph-based theory for convexification of mission-planning
  constraints and generative pre-trained trajectory optimization.
\newblock {\em IFAC Journal of Systems and Control}, 30:100286, 2024.

\bibitem{deshmukh2017robust}
Jyotirmoy~V Deshmukh, Alexandre Donz{\'e}, Shromona Ghosh, Xiaoqing Jin, Garvit
  Juniwal, and Sanjit~A Seshia.
\newblock Robust online monitoring of signal temporal logic.
\newblock {\em Formal Methods in System Design}, 51:5--30, 2017.

\bibitem{dokhanchi2014line}
Adel Dokhanchi, Bardh Hoxha, and Georgios Fainekos.
\newblock On-line monitoring for temporal logic robustness.
\newblock In {\em International Conference on Runtime Verification}, pages
  231--246. Springer, 2014.

\bibitem{donze2013efficient}
Alexandre Donz{\'e}, Thomas Ferrere, and Oded Maler.
\newblock Efficient robust monitoring for stl.
\newblock In {\em International conference on computer aided verification},
  pages 264--279. Springer, 2013.

\bibitem{eisner2003reasoning}
Cindy Eisner, Dana Fisman, John Havlicek, Yoad Lustig, Anthony McIsaac, and
  David Van~Campenhout.
\newblock Reasoning with temporal logic on truncated paths.
\newblock In {\em International Conference on Computer Aided Verification},
  pages 27--39. Springer, 2003.

\bibitem{fainekos2009robustness}
Georgios~E Fainekos and George~J Pappas.
\newblock Robustness of temporal logic specifications for continuous-time
  signals.
\newblock {\em Theoretical Computer Science}, 410(42):4262--4291, 2009.

\bibitem{ho2022automaton}
Qi~Heng Ho, Roland~B Ilyes, Zachary~N Sunberg, and Morteza Lahijanian.
\newblock Automaton-guided control synthesis for signal temporal logic
  specifications.
\newblock In {\em IEEE Conference on Decision and Control}, pages 3243--3249.
  IEEE, 2022.

\bibitem{jagtap2020formal}
Pushpak Jagtap, Sadegh Soudjani, and Majid Zamani.
\newblock Formal synthesis of stochastic systems via control barrier
  certificates.
\newblock {\em IEEE Transactions on Automatic Control}, 66(7):3097--3110, 2020.

\bibitem{lindemann2023conformal}
L.~Lindemann, X.~Qin, J.~V. Deshmukh, and G.~J. Pappas.
\newblock Conformal prediction for stl runtime verification.
\newblock In {\em ACM/IEEE International Conference on Cyber-Physical Systems},
  pages 142--153, 2023.

\bibitem{ma2021novel}
M.~Ma, E.~Bartocci, E.~Lifland, J.~A. Stankovic, and L.~Feng.
\newblock A novel spatial--temporal specification-based monitoring system for
  smart cities.
\newblock {\em IEEE Internet of Things Journal}, 8(15):11793--11806, 2021.

\bibitem{ma2021predictive}
M.~Ma, J.~Stankovic, E.~Bartocci, and L.~Feng.
\newblock Predictive monitoring with logic-calibrated uncertainty for
  cyber-physical systems.
\newblock {\em ACM Transactions on Embedded Computing Systems (TECS)},
  20(5s):1--25, 2021.

\bibitem{malerMonitoringTemporalProperties2004}
Oded Maler and Dejan Nickovic.
\newblock Monitoring temporal properties of continuous signals.
\newblock In Yassine Lakhnech and Sergio Yovine, editors, {\em Formal
  Techniques, Modelling and Analysis of Timed and Fault-Tolerant Systems},
  volume 3253, pages 152--166.
\newblock Series Title: Lecture Notes in Computer Science.

\bibitem{momtaz2023predicate}
Anik Momtaz, Niraj Basnet, Houssam Abbas, and Borzoo Bonakdarpour.
\newblock Predicate monitoring in distributed cyber-physical systems.
\newblock {\em International Journal on Software Tools for Technology
  Transfer}, 25(4):541--556, 2023.

\bibitem{qin2020clairvoyant}
Xin Qin and Jyotirmoy~V Deshmukh.
\newblock Clairvoyant monitoring for signal temporal logic.
\newblock In {\em International Conference Formal Modeling and Analysis of
  Timed Systems}, pages 178--195. Springer, 2020.

\bibitem{sahin2020autonomous}
Yunus~Emre Sahin, Rien Quirynen, and Stefano Di~Cairano.
\newblock Autonomous vehicle decision-making and monitoring based on signal
  temporal logic and mixed-integer programming.
\newblock In {\em 2020 American Control Conference (ACC)}, pages 454--459.
  IEEE, 2020.

\bibitem{wang2024sleep}
Chuwei Wang, Xinyi Yu, Jianing Zhao, Lars Lindemann, and Xiang Yin.
\newblock Sleep when everything looks fine: Self-triggered monitoring for
  signal temporal logic tasks.
\newblock {\em IEEE Robotics and Automation Letters}, 2024.

\bibitem{yan2022distributed}
Ruixuan Yan and Agung Julius.
\newblock Distributed consensus-based online monitoring of robot swarms with
  temporal logic specifications.
\newblock {\em IEEE Robotics and Automation Letters}, 7(4):9413--9420, 2022.

\bibitem{yoon2021predictive}
Hansol Yoon and Sriram Sankaranarayanan.
\newblock Predictive runtime monitoring for mobile robots using logic-based
  bayesian intent inference.
\newblock In {\em 2021 IEEE International Conference on Robotics and Automation
  (ICRA)}, pages 8565--8571. IEEE, 2021.

\bibitem{yu2024continuous}
P.~Yu, X.~Tan, and D.V. Dimarogonas.
\newblock Continuous-time control synthesis under nested signal temporal logic
  specifications.
\newblock {\em IEEE Transactions on Robotics}, 40:2272--2286, 2024.

\bibitem{yu2024model}
Xinyi Yu, Weijie Dong, Shaoyuan Li, and Xiang Yin.
\newblock Model predictive monitoring of dynamical systems for signal temporal
  logic specifications.
\newblock {\em Automatica}, 160:111445, 2024.

\bibitem{zhang2023online}
Z.~Zhang, J.~An, P.~Arcaini, and I.~Hasuo.
\newblock Online causation monitoring of signal temporal logic.
\newblock In {\em International Conference on Computer Aided Verification},
  pages 62--84. Springer, 2023.

\bibitem{zhao2024robust}
Yiqi Zhao, Bardh Hoxha, Georgios Fainekos, Jyotirmoy~V Deshmukh, and Lars
  Lindemann.
\newblock Robust conformal prediction for stl runtime verification under
  distribution shift.
\newblock In {\em ACM/IEEE International Conference on Cyber-Physical Systems},
  pages 169--179. IEEE, 2024.

\end{thebibliography}

\newpage
Let $v\in V$ be a node in  tree $\mathcal{T}$. 
The \emph{sub-tree} induced from node $v$ is a new rooted tree
$\mathcal{T}^v = (V', L, E', v_{\text{root}}') $, 
where $V'=\{v'\in V\mid v'\preceq v\}$ is the set of descendants of $v$ including itself, 
$E'\subseteq V' \times L \times V'$ is the restriction of $E$ onto $V'$, 
and $v_{\text{root}}'=v$  is the root of the sub-tree.

Let $\Phi$ be an STL formula and $\mathcal{T}_\Phi$ be its syntax tree. 
The induced sub-tree from  any node $v\in V_\Phi$ is also a syntax tree 
We use $\mathcal{T}^{v}$ to denote the subtree with $v$ as the root node. The subtree $\mathcal{T}^{v}$ represents a subformula $\Phi^v$ of the original STL formula $\Phi$.

To prove Proposition 1 and Proposition 2, we prove Proposition 3 at first. Then we use Proposition 3 to prove Proposition 2 and prove Proposition 1 at last.
\begin{proposition}\label{prop-3}\upshape
For any state sequence $\mathbf{x}_{0:k}$ and subformula $\Phi^{v}$ we have 
$$
  (\mathbf{x}_{0:k},t)\models \Phi^{v}  \quad\Leftrightarrow \quad   \imath^{v}_{I(\mathbf{x}_{0:k})}[t]=1.
$$
\end{proposition}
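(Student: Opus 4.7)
The proof plan is to proceed by structural induction on the syntax tree $\mathcal{T}_\Phi$, traversing nodes in a bottom-up fashion from $\mathcal{H}$-leaves up to $v_{\text{root}}$. Because the ``uncertain'' status $?$ plays a symmetric role between $0$ and $1$, it will be cleaner to strengthen the statement and prove \emph{simultaneously} the two equivalences $(\mathbf{x}_{0:k},t)\models \Phi^v \Leftrightarrow \imath^v_{I(\mathbf{x}_{0:k})}[t]=1$ and $(\mathbf{x}_{0:k},t)\not\models \Phi^v \Leftrightarrow \imath^v_{I(\mathbf{x}_{0:k})}[t]=0$, where the partial-signal satisfaction $(\mathbf{x}_{0:k},t)\models \Phi^v$ is read with the three-valued semantics from Section~\ref{sec:3} (i.e., it holds iff every completion $\mathbf{x}_{k+1:T}$ yields $(\mathbf{x}_{0:k}\mathbf{x}_{k+1:T},t)\models \Phi^v$). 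Maintaining both equivalences in the inductive hypothesis is essential, because the recursive clauses in Definition~\ref{def-5} for $\mathbf{G}$ and $\mathbf{U}'$ use both the $0$-entries and the $1$-entries of the children vectors.

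For the base case, $v$ is an $\mathcal{H}$-node and $\Phi^v$ is $x\in\mathcal{H}^v$. Here $\imath^v_{I(\mathbf{x}_{0:k})}[t]$ is determined exclusively by the online update rule~\eqref{eq:online_update}: if $t<k$ the entry equals $1$ iff $x_t\in\mathcal{H}^v$, which matches the pointwise semantics of the atomic predicate; if $t\geq k$ the entry is $?$, and correspondingly we can always choose a completion $x_t\in\mathcal{H}^v$ or $x_t\notin\mathcal{H}^v$, so neither $(\mathbf{x}_{0:k},t)\models x\in\mathcal{H}^v$ nor $(\mathbf{x}_{0:k},t)\not\models x\in\mathcal{H}^v$ holds. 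For the conjunction case $v\in V_\wedge$, the STL semantics of $\wedge$ gives $(\mathbf{x}_{0:k},t)\models \bigwedge_{v'\in\textsf{child}(v)}\Phi^{v'}$ iff each child subformula is satisfied at $t$, which by the induction hypothesis means $\imath^{v'}[t]=1$ for every child, exactly the condition producing $\imath^v[t]=1$ in Definition~\ref{def-5}. The dual for the $0$-case is equally direct.

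The two temporal cases are the main obstacle and will consume most of the proof. For $v\in V_{\mathbf{G}}$ with unique child $v'$, one invokes the semantic equivalence $(\mathbf{x}_{0:k},t)\models \mathbf{G}_{[a^v,b^v]}\Phi^{v'} \Leftrightarrow \forall t'\in[a^v,b^v]:(\mathbf{x}_{0:k},t+t')\models \Phi^{v'}$, translate the right-hand side through the induction hypothesis into $\forall t'\in[a^v,b^v]: \imath^{v'}[t+t']=1$, and match this with the first branch of Definition~\ref{def-5}; the $0$-case uses the dual existential formulation. The $\mathbf{U}'$ case is technically the heaviest because the definition involves a nested alternation of quantifiers over the witness $t'$ and the interior index $t''$: I will carefully expand the $\mathbf{U}'$ semantics, pair the existential witness with the right-child index attaining $\imath^{v_r}[t+t']=1$, verify that the universal condition on the left child maps into $\forall t''\in[a^v,t']:\imath^{v_l}[t+t'']=1$, and then read off the matching branch of Definition~\ref{def-5}. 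For the $0$-case I must handle both disjuncts in the definition (right child always violated, or the first time the right child is $1$ is preceded by a $0$ on the left), which is the subtlest piece of bookkeeping.

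With Proposition~\ref{prop-3} established, the stated Proposition~\ref{prop-2} follows immediately by specializing $v=v_{\text{root}}$ and $t=0$, since $\imath^{\text{root}}_{I(\mathbf{x}_{0:k})}$ has evaluation horizon $[0,0]$ and represents the full formula $\Phi$. The anticipated difficulty is purely the case analysis for $\mathbf{U}'$ and correctly threading the partial-signal semantics through the induction; the rest of the argument is a direct translation of Definition~\ref{def-5} into STL semantics.
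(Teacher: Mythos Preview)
Your approach---structural induction on the syntax tree from $\mathcal{H}$-leaves up to the root, matching each clause of Definition~\ref{def-5} against the corresponding STL semantic rule---is exactly the paper's. The paper carries out the same induction but more tersely, handling only the $\Leftrightarrow 1$ equivalence and not your paired $\Leftrightarrow 0$ equivalence.

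One correction: your claim that maintaining both equivalences is ``essential'' overshoots. Inspecting Definition~\ref{def-5}, the clause that sets $\imath^v[t]=1$ for each of $\wedge$, $\mathbf{G}$, and $\mathbf{U}'$ refers \emph{only} to $1$-entries of the children's vectors; the $0$-entries appear solely in the $\imath^v[t]=0$ clauses. Hence the $=1$ equivalence propagates through the induction on its own, which is precisely what the paper's proof exploits. Your strengthening is not wrong---it yields the full three-valued correspondence and would be needed to justify, for instance, that $\imath^{\text{root}}=0$ characterizes violation---but it is additional work beyond what Proposition~\ref{prop-3} asks. Where your more careful three-valued reading genuinely helps is in making explicit the base case when $t> k$ (the $?$ situation), which the paper glosses over by implicitly assuming the relevant positions have been observed.
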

\begin{proof}
    Prove this proposition recursively by mathematical induction. For the case of leaf node $\Phi^v = x\in\mathcal{H}$:\\
    $\Rightarrow$: When $(\mathbf{x}_{0:k},t)\models \Phi^{v}$, there must be $x_t\in\mathcal{H}$. According to ~\eqref{eq:online_update}, $\imath^{v}_{I(\mathbf{x}_{0:k})}[t]=1$.\\
    $\Leftarrow$: When $\imath^{v}_{I(\mathbf{x}_{0:k})}[t]=1$, which also means $x_t\in\mathcal{H}$. So $(\mathbf{x}_{0:k},t)\models \Phi^{v}$.
    \\The proposition holds for leaf nodes. Assuming it always holds for child nodes, now prove the case of other nodes.
    \begin{enumerate}
        \item $\Phi^v = \Phi^{v^c_1}\wedge\Phi^{v^c_2}\wedge\dots\wedge\Phi^{v^c_n}$:\\
        $\Rightarrow$: When $(\mathbf{x}_{0:k},t)\models \Phi^{v}$, there must be $(\mathbf{x}_{0:k},t)\models \Phi^{v^c_1}, (\mathbf{x}_{0:k},t)\models \Phi^{v^c_2},\dots, (\mathbf{x}_{0:k},t)\models \Phi^{v^c_n}$. So $\forall v^c_i\in \textsf{child}(v)$, $\imath^{v^c_i}_{I(\mathbf{x}_{0:k})}[t]=1$. Then $\imath^{v}_{I(\mathbf{x}_{0:k})}[t]=1$ according to Definiton ~\ref{def-5}.
        \\$\Leftarrow$: Reverse the proof of $\Rightarrow$.
        
        \item $\Phi^v = \mathbf{G}_{[a,b]}\Phi^{v^c}$:\\
        $\Rightarrow$: When $(\mathbf{x}_{0:k},t)\models \Phi^{v}$, there must be $\forall t'\in [t+a,t+b], (\mathbf{x}_{0:k},t')\models \Phi^{v^c}$. So $\forall t'\in [t+a,t+b], \imath^{v^c}_{I(\mathbf{x}_{0:k})}[t']=1$. So $\forall v^c_i\in \textsf{child}(v)$, $\imath^{v^c_i}_{I(\mathbf{x}_{0:k})}[t]=1$. Then $\imath^{v}_{I(\mathbf{x}_{0:k})}[t]=1$ according to Definiton ~\ref{def-5}.
        \\$\Leftarrow$: Reverse the proof of $\Rightarrow$.

        \item $\Phi^v = \Phi^{v^{lc}}\mathbf{U'}_{[a,b]}\Phi^{v^{rc}}$:\\
        $\Rightarrow$: When $(\mathbf{x}_{0:k},t)\models \Phi^{v}$, there must be $\exists t'\in [t+a,t+b], (\mathbf{x}_{0:k},t')\models \Phi^{v^{rc}}$ i.e. $\imath^{v^{rc}}_{I(\mathbf{x}_{0:k})}[t']=1$ \:and\: $\forall t''\in[t+a, t']$ such that $(\mathbf{x}_{0:k},t'')\models \Phi^{v^{lc}}$ i.e. $\imath^{v^{lc}}_{I(\mathbf{x}_{0:k})}[t'']=1$. So $\imath^{v}_{I(\mathbf{x}_{0:k})}[t]=1$ according to Definiton~\ref{def-5}.
        \\$\Leftarrow$: Reverse the proof of $\Rightarrow$.
    \end{enumerate}
    According to the above, we prove $(\mathbf{x}_{0:k},t)\models \Phi^{v}  \Leftrightarrow  \imath^{v}_{I(\mathbf{x}_{0:k})}[t]=1$. 
\end{proof}

\vspace{10pt}
\noindent\textbf{Proposition 2.} For any state sequence $\mathbf{x}_{0:k}$, we have 
$$
  \mathbf{x}_{0:k}\models \Phi  \quad\Leftrightarrow \quad   \imath^{\text{root}}_{I(\mathbf{x}_{0:k})}[0]=1.
$$
\begin{proof}
    As a special case of  Proposition~\ref{prop-3}, when $v=v_\text{root}$ and $t=0$, $\mathbf{x}_{0:k}\models \Phi \Leftrightarrow   \imath^{\text{root}}_{I(\mathbf{x}_{0:k})}[0]=1$, which means the root vector fully captures the satisfaction status of the entire STL formula.
\end{proof}

\vspace{10pt}
\noindent\textbf{Proposition 1.} Let $I$ be a basic set and $k$ be a time instant. 
For any two sequences 
$\mathbf{x}_{0:k-1}',\mathbf{x}_{0:k-1}''\in \mathbf{x}_{0:k-1}^I$
consistent with $I$, 
and any future sequence $\mathbf{x}_{k:T}=x_{k}x_{k+1}\cdots x_T$, we have 
$$
\mathbf{x}_{0:k-1}'\mathbf{x}_{k:T}\models \Phi 
\quad \Leftrightarrow \quad 
\mathbf{x}_{0:k-1}''\mathbf{x}_{k:T}\models \Phi. 
$$

\begin{proof}
         According to Proposition~\ref{prop-2}, we know that $\mathbf{x}_{0:k-1}'\mathbf{x}_{k:T}\models \Phi \Leftrightarrow \imath^{v_{\text{root}}}_{I(\mathbf{x}_{0:k-1}'\mathbf{x}_{k:T})}[0]=1$. For $\mathbf{x}_{0:k-1}',\mathbf{x}_{0:k-1}''\in \mathbf{x}_{0:k-1}^I$, we have $I(\mathbf{x}_{0:k-1}'\mathbf{x}_{k:T}) = I_(\mathbf{x}_{0:k-1}''\mathbf{x}_{k:T})$. So there must be $\imath^{v_{\text{root}}}_{I(\mathbf{x}_{0:k-1}'\mathbf{x}_{k:T})}[0]=\imath^{v_{\text{root}}}_{I(\mathbf{x}_{0:k-1}''\mathbf{x}_{k:T})}[0]=1$. Then due to Proposition~\ref{prop-2}, we have $\mathbf{x}_{0:k-1}''\mathbf{x}_{k:T}\models \Phi$. The proof above also holds in reverse. So there is $\mathbf{x}_{0:k-1}'\mathbf{x}_{k:T}\models \Phi \Leftrightarrow \mathbf{x}_{0:k-1}''\mathbf{x}_{k:T}\models \Phi. $
\end{proof}
 
\end{document}